\theoremstyle{plain}
\newtheorem{thm}{Theorem}[section]
\newtheorem{cor}[thm]{Corollary}
\newtheorem{rem}[thm]{Remark}
\newtheorem{ques}[thm]{Question}
\newtheorem{exam}[thm]{Example}
\def\bbb{\mathbb}
\renewcommand{\phi}{\varphi}
\newcommand{\N}{\bbb{N}}
\newcommand{\Z}{\bbb{Z}}
\newcommand{\Q}{\bbb{Q}}
\newcommand{\C}{\bbb{C}}
\begin{document}

\title[A note on Sierpi\'{n}ski problem]{A note on Sierpi\'{n}ski problem related to triangular numbers}
\author{Maciej Ulas}

\keywords{triangular numbers, Sierpi\'{n}ski problem, rational
points, diophantine equations} \subjclass[2000]{11D41, 11D72}

\begin{abstract}
In this note we show that the system of equations
\begin{equation*}
t_{x}+t_{y}=t_{p},\quad t_{y}+t_{z}=t_{q},\quad t_{x}+t_{z}=t_{r},
\end{equation*}
where $t_{x}=x(x+1)/2$ is a triangular number, has infinitely many
solutions in integers. Moreover we show that this system has
rational three-parametric solution. Using this result we show that
the system
\begin{equation*}
t_{x}+t_{y}=t_{p},\quad t_{y}+t_{z}=t_{q},\quad
t_{x}+t_{z}=t_{r},\quad t_{x}+t_{y}+t_{z}=t_{s}
\end{equation*}
has infinitely many rational two-parametric solutions.
\end{abstract}

\maketitle

\section{Introduction}\label{Section1}

By a triangular number we call the number of the form
\begin{equation*}
t_{n}=1+2+\ldots +n-1 + n=\frac{n(n+1)}{2},
\end{equation*}
where $n$ is a natural number. Triangular number can be interpreted
as a number of circles  necessary to build equilateral triangle with
a side of length $n$. There are a lot of papers related to the
various types of diophantine equations containing triangular numbers
and various generalizations of them
\cite{Sier1,Sier2,Sier3,Sier4,Sier5}. One of my favourite is a
little book \cite{Sier4} written by W. Sierpi\'{n}ski.

On the page 33. of his book W. Sierpi\'{n}ski stated an interesting
question related to the triangular numbers. This question is
following:

\begin{ques}\label{ques1}
Is it possible to find three different triangular numbers with such
a property that sum of any two is a triangular number? In other
words: is it possible to find solutions of the system of equations
\begin{equation}\label{eq1}
t_{x}+t_{y}=t_{p},\quad t_{y}+t_{z}=t_{q},\quad t_{z}+t_{x}=t_{r},
\end{equation}
in positive integers $x,y,z,p,q,r$ satisfying the condition $x<y<z$?
\end{ques}

In the next section we give all integer solutions of the system
(\ref{eq1}) satisfying the condition $x<y<z<1000$ and next we
construct two one-parameter polynomial solutions of our system
(Theorem \ref{thm1}).

In section \ref{Section3} we change perspective a bit and ask about
rational parametric solutions of our problem. Using very simple
reasoning we are able to construct rational parametric solution with
three variable parameters (Theorem \ref{thm2}).

Finally, in the last section we consider the system of equations
\begin{equation*}
t_{x}+t_{y}=t_{p},\; t_{y}+t_{z}=t_{q},\;
t_{z}+t_{x}=t_{r},\;t_{x}+t_{y}+t_{z}=t_{s}.
\end{equation*}
We give some integer solutions of this system. Next, we use
parametrization obtained in the Theorem \ref{thm2} to obtain
infinitely many rational solutions depending on two parameters. In
order to prove this we show that certain elliptic curve defined over
the field $\Q(u,v)$ has positive rank.

\section{Integer solutions of the system $t_{x}+t_{y}=t_{p},\; t_{y}+t_{z}=t_{q},\; t_{z}+t_{x}=t_{r}$}\label{Section2}
In order to find integer solutions of the system (\ref{eq1}) we use
the computer. We are looking for solutions satisfying the condition
$x<y<z<1000$. We find 44 solutions in this range. Results of our
search are presented in Table 1.

\begin{equation*}
\begin{array}{ccc}
  \begin{tabular}{|c|c|c|c|c|c|}
     \hline
     $x$ & $y$ & $z$ & $p$ & $q$ & $r$ \\
     \hline
     \hline
        9& 13& 44& 16& 46& 45\\
        14& 51& 104& 53& 116& 105\\
        20& 50& 209& 54& 215& 210\\
        23& 30& 90& 38& 95& 93\\
        27& 124& 377& 127& 397& 378\\
        35& 65& 86& 74& 108& 93\\
        35& 123& 629& 128& 641& 630\\
        41& 119& 285& 126& 309& 288\\
        44& 245& 989& 249& 1019& 990\\
        51& 69& 104& 86& 125& 116\\
        54& 143& 244& 153& 283& 250\\
        62& 99& 322& 117& 337& 328\\
        65& 135& 209& 150& 249& 219\\
        66& 195& 365& 206& 414& 371\\
        74& 459& 923& 465& 1031& 926\\
        76& 90& 144& 118& 170& 163\\
        77& 125& 132& 147& 182& 153\\
        77& 125& 207& 147& 242& 221\\
        83& 284& 494& 296& 570& 501\\
        105& 170& 363& 200& 401& 378\\
        105& 363& 390& 378& 533& 404\\
        105& 551& 924& 561& 1076& 930\\
     \hline
   \end{tabular}
   & &
 \begin{tabular}{|c|c|c|c|c|c|}
     \hline
     $x$ & $y$ & $z$ & $p$ & $q$ & $r$ \\
     \hline
     \hline
   114& 429& 650& 444& 779& 660\\
         131& 174& 714& 218& 735& 726\\
         131& 245& 714& 278& 755& 726\\
         135& 154& 531& 205& 553& 548\\
         161& 260& 924& 306& 960& 938\\
         170& 469& 755& 499& 889& 774\\
         189& 305& 406& 359& 508& 448\\
         216& 390& 854& 446& 939& 881\\
         230& 741& 870& 776& 1143& 900\\
         237& 527& 650& 578& 837& 692\\
         245& 714& 989& 755& 1220& 1019\\
         252& 272& 702& 371& 753& 746\\
         278& 370& 594& 463& 700& 656\\
         286& 405& 494& 496& 639& 571\\
         293& 390& 854& 488& 939& 903\\
         299& 441& 560& 533& 713& 635\\
         350& 629& 781& 720& 1003& 856\\
         476& 634& 665& 793& 919& 818\\
         581& 774& 935& 968& 1214& 1101\\
         588& 645& 689& 873& 944& 906\\
         609& 779& 923& 989& 1208& 1106\\
         714& 798& 989& 1071& 1271& 1220\\
\hline
\end{tabular}
\end{array}
\end{equation*}
\begin{center}
{\sc Table 1}
\end{center}

\bigskip

The above solutions show that the answer on Sierpi\'{n}ski's
question is easy. However, due to abundance of solutions it is
natural to ask if we can find infinitely many solutions of the
system (\ref{eq1}).

\begin{thm}\label{thm1}
The system of equations {\rm(\ref{eq1})} has infinitely many
solutions in integers.
\end{thm}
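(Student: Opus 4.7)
The plan is to prove Theorem~\ref{thm1} by exhibiting two explicit one-parameter polynomial families of integer solutions. First, I would rewrite the system via the substitution $X=2x+1$, $Y=2y+1$, $Z=2z+1$, $P=2p+1$, $Q=2q+1$, $R=2r+1$, which (using $8t_n+1=(2n+1)^2$) transforms the three equations into the symmetric form
\[
X^2+Y^2=P^2+1,\quad Y^2+Z^2=Q^2+1,\quad X^2+Z^2=R^2+1.
\]
I would then seek polynomial parametrizations $(X(n),Y(n),Z(n),P(n),Q(n),R(n))\in\Z[n]^6$ satisfying these identities.

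The key observation driving the search is that a polynomial identity $t_{a(n)}+t_{b(n)}=t_{c(n)}$, with $a,b,c\in\Z[n]$ of equal degree, forces the leading coefficients to satisfy a Pythagorean equation. Starting from the triple $(3,4,5)$, direct verification gives the elementary polynomial identities
\[
t_{3n+2}+t_{4n+2}=t_{5n+3}\aand t_{3n}+t_{4n+1}=t_{5n+1},
\]
each of which parametrizes the first equation of the system. To extend such a partial parametrization to all three equations, I would let $z,q,r$ be polynomials of higher degree in $n$ (quadratic, in the first instance) and determine their coefficients by matching monomials in the two remaining identities $t_y+t_z=t_q$ and $t_x+t_z=t_r$. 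The coefficient-matching reduces to a combination of linear equations (for the subleading coefficients) and a ``baby'' triangular-sum identity (for the constant coefficients), which can be solved by hand. Executing this for each of the two identities above yields the desired two one-parameter families; distinctness of the resulting tuples for different $n$ is immediate, and for $n$ sufficiently large they respect the ordering $x<y<z$.

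The main obstacle is that the coefficient-matching system for the three polynomial identities is over-determined: combining the linear constraints from the $(y,z,q)$- and $(x,z,r)$-identities produces a compatibility condition relating the coefficients of $x,y,z$. Some trial is needed to find a base Pythagorean family and a polynomial ansatz whose constraints are jointly solvable over $\Z$. Once the two families are in hand, verification of the three identities is a routine polynomial expansion, which completes the proof.
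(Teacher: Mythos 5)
There is a genuine gap here: the concrete ansatz you propose --- $x,y,p$ linear with leading coefficients from a Pythagorean triple, and $z,q,r$ of degree $\geq 2$ --- provably admits \emph{no} solution, and the obstruction is not mere over-determinacy to be overcome by ``some trial'' but an unconditional incompatibility. Indeed, suppose $x,y$ are non-constant linear polynomials and $\deg z\geq 2$. In $t_y+t_z=t_q$ the identity $(q-z)(q+z+1)=y(y+1)$ has degree $2$ on the right, so (up to the symmetry $t_{-m-1}=t_m$, which lets one replace $q$ by $-q-1$) the factor $q-z$ must be a nonzero constant $k_1$ and $z$ must be exactly quadratic; then $(2y+1)^2=8k_1z+(2k_1+1)^2$, and likewise $(2x+1)^2=8k_2z+(2k_2+1)^2$ from the third equation. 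Hence $k_2(2y+1)^2-k_1(2x+1)^2$ is a constant; comparing the coefficients of $n^2$ and of $n$ forces $2y+1=\lambda(2x+1)$ for a constant $\lambda$. But then $(2p+1)^2=(2x+1)^2+(2y+1)^2-1=(1+\lambda^2)(2x+1)^2-1$, and comparing the three coefficients of this quadratic identity gives $0=1$ in the constant term. So no choice of base Pythagorean family $(3,4,5),(5,12,13),\dots$ and no degree for $z$ can make your scheme close up: the equal-degree ``Pythagorean'' shape for the first equation has to be abandoned entirely, and with it the key observation that drives your search.

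For comparison, the paper does not derive its families at all: it simply exhibits two explicit six-tuples of polynomials, found by inspecting the numerical solutions in Table 1, and the proof is the routine verification you describe at the end. The crucial structural difference is that in those families the degrees are staggered, $\deg x=2$, $\deg y=\deg p=3$, $\deg z=\deg q=\deg r=4$ (e.g.\ $x=(u+1)(2u+5)$, $y=(u+2)(2u^2+8u+7)$, $z=(2u^2+7u+4)(2u^2+7u+7)/2$), so that \emph{every} equation is of the telescoping type $t_{m+k}-t_m=km+t_k$ with $p-y=u+1$, $r-z=1$ and $q-z=(u+2)(u+3)$ of low degree; no Pythagorean relation among leading coefficients ever appears. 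If you want a derivation rather than a verification, this staggered telescoping ansatz is the one whose coefficient-matching system is actually solvable over $\Z$.
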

\begin{proof}
In order to find infinitely many solutions of our problem we
examined the Table 1. We find that for any $u\in\N$ the values of
polynomials given by
\begin{equation*}
\begin{array}{ll}
x=(u+1)(2u+5), & p=2u^3+12u^2+24u+15, \\
y=(u+2)(2u^2+8u+7), & q=(4u^4+28u^3+73u^2+87u+40)/2, \\
z=(2u^2+7u+4)(2u^2+7u+7)/2, &r=(4u^4+28u^3+71u^2+77u+30)/2.
\end{array}
\end{equation*}
or
\begin{equation*}
\begin{array}{ll}
x=(u+3)(2u+3), & p=2u^3+12u^2+24u+16, \\
y=(u+1)(2u^2+10u+13), & q=(4u^4+36u^3+121u^2+177u+92)/2, \\
z=(2u^2+9u+8)(2u^2+9u+11)/2, &r=(u+2)(u+3)(2u+3)(2u+5)/2,
\end{array}
\end{equation*}
are integers and are solutions of the system (\ref{eq1}).
\end{proof}

Using the parametric solutions we have obtained above we get
\begin{cor}
Let $f(X)=X(X+a)$, where $a\in\Z\setminus\{0\}$. Then there is
infinitely many positive integer solutions of the system
\begin{equation*}
(\star)\quad f(x)+f(y)=f(p),\quad f(y)+f(z)=f(q),\quad
f(z)+f(x)=f(r).
\end{equation*}
\end{cor}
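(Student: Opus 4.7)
The plan is to reduce the system $(\star)$ to the triangular number system (\ref{eq1}) by an affine rescaling of the unknowns, and then invoke Theorem \ref{thm1}. The key algebraic observation is that $f(X)=X(X+a)$ is, up to the constant $2a^2$, the triangular-number function after rescaling. Indeed
\[
f(aY) \;=\; aY(aY+a) \;=\; a^2 Y(Y+1) \;=\; 2a^2\,t_Y,
\]
so a positive integer solution of (\ref{eq1}), dilated by $|a|$, should produce a positive integer solution of $(\star)$.

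First I would handle the case $a>0$. By Theorem \ref{thm1} there are infinitely many positive integer tuples $(x,y,z,p,q,r)$ satisfying $t_x+t_y=t_p$, $t_y+t_z=t_q$, $t_z+t_x=t_r$. For each such tuple I set $(X,Y,Z,P,Q,R):=(ax,ay,az,ap,aq,ar)$; these are positive integers, and the identity above gives $f(X)+f(Y)=2a^2(t_x+t_y)=2a^2 t_p=f(P)$, with the other two equations handled identically. Distinct input tuples produce distinct output tuples because the map $x\mapsto ax$ is injective, so we obtain infinitely many solutions.

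For $a<0$ the same rescaling would yield negative values, so instead I would exploit the symmetry $f(X)=f(-X-a)$ and substitute $X=|a|(Y+1)$. A short computation gives
\[
f\bigl(|a|(Y+1)\bigr) \;=\; |a|(Y+1)\bigl(|a|(Y+1)+a\bigr) \;=\; |a|(Y+1)\cdot|a|Y \;=\; 2a^2\,t_Y,
\]
which is positive for $Y\ge 1$. Applying this substitution coordinatewise to any tuple $(x,y,z,p,q,r)$ supplied by Theorem \ref{thm1} again yields a positive integer solution of $(\star)$, and again infinitely many distinct ones.

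The main (and essentially only) subtlety is ensuring positivity in the sign-flipped case $a<0$; once the identity $f(|a|(Y+1))=2a^2 t_Y$ is noted, both cases reduce mechanically to Theorem \ref{thm1}.
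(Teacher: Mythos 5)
Your proposal is correct and matches the paper's proof: both reduce $(\star)$ to the triangular system via the identity $f(ax)=2a^2t_x$ and invoke Theorem \ref{thm1}. The only difference is that the paper dismisses the case $a<0$ with ``we can assume $a>0$,'' whereas you make that reduction explicit via the substitution $X=|a|(Y+1)$, which is a welcome but inessential elaboration.
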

\begin{proof}
It is clear that we can assume $a>0$. Now, note that if sextuple
$x,y,z,p,q,r$ is a solution of system (\ref{eq1}), then the sextuple
$ax,ay,az,ap,aq,ar$ is a solution of the system ($\star$).
\end{proof}

In the light of the above corollary we end this section with the
following
\begin{ques}
Let $f\in\Z[x]$ be a polynomial of degree two with two distinct
roots in $\C$. Is the system of equations
\begin{equation*}
f(x)+f(y)=f(p),\quad f(y)+f(z)=f(q),\quad f(z)+f(x)=f(r),
\end{equation*}
solvable in different positive integers ?
\end{ques}

\section{Rational solutions of the system $t_{x}+t_{y}=t_{p},\; t_{y}+t_{z}=t_{q},\; t_{z}+t_{x}=t_{r}$}\label{Section3}

In the view of the Theorem \ref{thm1} it is natural to state the
following
\begin{ques}\label{ques2}
Is the set of one-parameter polynomial solutions of the system {\rm
(\ref{eq1})} infinite?
\end{ques}
We suppose that the answer for this question is YES. Unfortunately,
we are unable to prove this. So, it is natural to ask if instead
polynomials we can find rational parametric solutions.
\begin{thm}\label{thm2}
There is three-parameter rational solution of the system {\rm
(\ref{eq1})}.
\end{thm}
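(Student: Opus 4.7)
The plan is to switch to the variables $X=2x+1$, $Y=2y+1$, $Z=2z+1$, $P=2p+1$, $Q=2q+1$, $R=2r+1$, which converts (\ref{eq1}) into the equivalent symmetric system $X^{2}+Y^{2}=P^{2}+1$, $Y^{2}+Z^{2}=Q^{2}+1$, $X^{2}+Z^{2}=R^{2}+1$. First I would rationally parametrize the smooth quadric $X^{2}+Y^{2}-P^{2}=1$ in $\mathbb{A}^{3}$ by two parameters $u,v$, using the chord method based at the rational point $(1,1,1)$; this absorbs the first equation and contributes two of the three parameters required.

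Next I would exhibit $Z, Q, R$ solving the remaining two equations as rational functions of $(X,Y)$. The key identity is that setting
\begin{equation*}
Z=\frac{X^{2}Y^{2}-X^{2}-Y^{2}}{2XY},\quad Q=\frac{X^{2}Y^{2}-X^{2}+Y^{2}}{2XY},\quad R=\frac{X^{2}Y^{2}+X^{2}-Y^{2}}{2XY}
\end{equation*}
gives $Q^{2}-Z^{2}=Y^{2}-1$ and $R^{2}-Z^{2}=X^{2}-1$, verifying the last two equations by direct expansion of $(Q\mp Z)(Q\pm Z)$ and $(R\mp Z)(R\pm Z)$. Combined with the conic parametrization of the first step, this already furnishes a two-parameter family of rational solutions of (\ref{eq1}).

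The final step is to release the third parameter. Observe that for fixed $(X,Y,P)$ the triples $(Z,Q,R)$ satisfying the remaining equations form an elliptic curve $E$ with affine model $V^{2}=u(u+Y^{2}-1)(u+X^{2}-1)$, whose 2-torsion is fully rational over $\Q(X,Y)$ because $(P^{2}-1)^{2}-4(X^{2}-1)(Y^{2}-1)=(X^{2}-Y^{2})^{2}$; the explicit triple above corresponds to a non-torsion rational section $P_{0}\in E(\Q(X,Y))$. The third parameter $w$ is then introduced by combining $P_{0}$ with a rational family of translates coming from the group law on $E$, equivalently by exhibiting a rational multisection of the associated elliptic threefold fibered over the $(u,v)$-plane. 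The main obstacle is precisely this last step: the obvious rational points on $E$ (the 2-torsion together with the translates $nP_{0}+T$ for $n\in\Z$, $T\in E[2]$) all lie in a rank-one subgroup, so the genuine third continuous parameter must be extracted either from an independent second rational section of $E$, or from an explicit rational multisection of the fibration — whose existence in the presence of full 2-torsion and a non-trivial section one must then verify by a direct algebraic construction.
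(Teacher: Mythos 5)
Your first two steps are sound: the substitution $X=2x+1$, etc., does convert (\ref{eq1}) into $X^{2}+Y^{2}=P^{2}+1$, $Y^{2}+Z^{2}=Q^{2}+1$, $X^{2}+Z^{2}=R^{2}+1$; the chord parametrization of the quadric absorbs the first equation; and your explicit $Z,Q,R$ do satisfy $Q^{2}-Z^{2}=Y^{2}-1$ and $R^{2}-Z^{2}=X^{2}-1$ (one checks $Q-Z=Y/X$, $Q+Z=X(Y^{2}-1)/Y$, and similarly for $R$). This honestly yields a two-parameter rational solution. But the theorem asserts three parameters, and your third step cannot deliver one: for fixed $(X,Y,P)$ the fiber $\{Q^{2}-Z^{2}=Y^{2}-1,\ R^{2}-Z^{2}=X^{2}-1\}$ is generically a smooth curve of genus one, and a genus-one curve admits no nonconstant rational map from $\mathbb{P}^{1}$; the group law, the translates $nP_{0}+T$, and any multisection produce only countably many sections over $\Q(u,v)$, never a continuous parameter moving along the fiber. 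So any construction that respects your fibration is doomed, a genuine third parameter would have to mix fiber and base directions, and you give no construction that does. As written, the argument proves a strictly weaker statement, and you acknowledge yourself that the decisive step is unresolved.

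The missing idea is far more elementary than elliptic fibrations. Each equation $t_{x}+t_{y}=t_{p}$ is equivalent to $y(y+1)=(p-x)(p+x+1)$, and this single quadratic relation can be split into the two \emph{linear} relations $y=u(p-x)$ and $u(y+1)=p+x+1$ at the cost of one free ratio $u$ (generically nothing is lost, since $u=y/(p-x)$). Doing this for all three equations with parameters $u,v,w$ turns (\ref{eq1}) into a system of six linear equations in the six unknowns $x,y,z,p,q,r$ over $\Q(u,v,w)$, which generically has a unique solution; writing it down gives the three-parameter rational solution explicitly. If you insist on salvaging your setup, you would need to prove that the solution threefold is unirational despite its elliptic fibration structure, and then still exhibit an explicit dominant rational map from $\mathbb{A}^{3}$ — which is precisely what your proposal lacks.
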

\begin{proof}
Let $u,v,w$ be a parameters. Lat us note that the system (\ref{eq1})
is equivalent to the system
\begin{equation}\label{eq2}
\begin{cases}
y=u(p-x),\quad u(y+1)=p+x+1,\\
z=v(q-y),\quad v(z+1)=q+y+1,\\
x=w(r-z),\quad w(x+1)=r+z+1.
\end{cases}
\end{equation}
Because we are interested in rational solutions, we can look on the
system of equations (\ref{eq2}) as on the system of linear equations
with unknowns $x,y,z,p,q,r$. This system has a solution given by
\begin{align*}
x(u,v,w)=&\frac{(u-1)w(1+u-2v+2uv+v^2+uv^2+(-1-u+v^2+uv^2)w)}{(1-u^2)(v^2-1)(w^2-1)+8uvw},\\
y(u,v,w)=&\frac{u(v-1)(1-u+v-uv+(-2+2v)w+(1+u+v+uv)w^2)}{(1-u^2)(v^2-1)(w^2-1)+8uvw},\\
z(u,v,w)=&\frac{v(w-1)(1-2u+u^2-v+u^2v+(1+2u+u^2-v+u^2v)w)}{(1-u^2)(v^2-1)(w^2-1)+8uvw},\\
\end{align*}
and the quantities $p,q,r$ can by calculated from the identities
\begin{align*}
&p(u,v,w)=\frac{ux(u,v,w)+y(u,v,w)}{u},\\
&q(u,v,w)=\frac{vy(u,v,w)+z(u,v,w)}{v},\\
&r(u,v,w)=\frac{wz(u,v,w)+x(u,v,w)}{w}.
\end{align*}
\end{proof}

\begin{rem}
{\rm It is clear that the same reasoning can be used in order to
prove that the system
\begin{equation*}
f(x)+f(y)=f(p),\quad f(y)+f(z)=f(q),\quad f(z)+f(x)=f(r),
\end{equation*}
where $f\in\Z[x]$ is a polynomial of degree two with two different
roots, has rational parametric solution depending on three
parameters.

 }\end{rem}
\section{Rational solutions of the system $t_{x}+t_{y}=t_{p},\; t_{y}+t_{z}=t_{q},\;
t_{z}+t_{x}=t_{r},\;t_{x}+t_{y}+t_{z}=t_{s}$}\label{Section4}

We start this section with quite natural
\begin{ques}\label{ques3}
Is the system of equations
\begin{equation}\label{eq3}
t_{x}+t_{y}=t_{p},\quad t_{y}+t_{z}=t_{q},\quad
t_{z}+t_{x}=t_{r},\quad t_{x}+t_{y}+t_{z}=t_{s}
\end{equation}
solvable in integers?
\end{ques}

This question is mentioned in the very interesting book \cite[page
292]{Guy} and it is attributed to K. R. S. Sastry. In this book we
can find triple of integers $x=11,\;y=z=14$ (which was find by Ch.
Ashbacher). These number with $p=r=18,\;q=20,\;s=23$ satisfy the
system (\ref{eq3}). It is clear that the question about different
numbers $x,y,z,p,q,r,s$ which satisfy (\ref{eq3}) is more
interesting. Using the computer search we find some 7-tuples of
different integers which are the solutions of (\ref{eq3}). Our
results are contained in Table 2.

\begin{equation*}
\begin{tabular}{|c|c|c|c|c|c|c|}
  \hline
  $x$ & $y$ & $z$ & $p$ & $q$ & $r$ & $s$ \\
  \hline
  \hline
  230 & 741 & 870 & 776 & 1143 & 900 & 1166\\
  609 & 779 & 923 & 989 & 1208 & 1106 & 1353 \\
  714 & 798 & 989 & 1071 & 1271 & 1220 & 1458 \\
  1224 & 1716 & 3219 & 2108 & 3648 & 3444 & 3848 \\
  \hline
\end{tabular}
\end{equation*}
\begin{center}
{\sc Table 2}
\end{center}

It is quite likely that there is a polynomial solution of the system
(\ref{eq3}). However we are unable to find such a polynomials.

Now we use the parametric solutions obtained in theorem \ref{thm2}
to deduce the following

\begin{thm}\label{thm3}
The system of diophantine equations {\rm (\ref{eq3})} has infinitely
many rational solutions depending on two parameters.
\end{thm}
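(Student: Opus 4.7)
The plan is to leverage the parametrization from Theorem \ref{thm2}, which already satisfies the first three equations of (\ref{eq3}), and to impose the fourth equation $t_x+t_y+t_z=t_s$ as a single algebraic constraint on $(u,v,w)$. Using the identity $8t_n+1=(2n+1)^2$, the constraint is equivalent to
\begin{equation*}
(2s+1)^2 = (2x+1)^2 + (2y+1)^2 + (2z+1)^2 - 2.
\end{equation*}
After substituting $x(u,v,w),\,y(u,v,w),\,z(u,v,w)$ from Theorem \ref{thm2} and clearing the common denominator $D(u,v,w)^2$, where $D=(1-u^2)(v^2-1)(w^2-1)+8uvw$, the right-hand side becomes a polynomial $P(u,v,w)$ of degree $4$ in $w$ with coefficients in $\Q[u,v]$. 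Setting $T=D\cdot(2s+1)$, the constraint reads $T^2=P(u,v,w)$.

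Viewing this as a curve over the field $K=\Q(u,v)$, the equation $T^2=P(w)$ defines a smooth curve $C$ of genus $1$, provided the discriminant of $P$ with respect to $w$ is a nonzero element of $K$ (which can be checked by a symbolic computation). To convert $C$ into Weierstrass form, I would use a convenient $K$-rational point on $C$, for instance one obtained by a degenerate specialization such as $w=0$ (which produces a trivial solution of the first three equations) or by translating one of the integer solutions listed in Table~2. The resulting elliptic curve $E/K$ is then a birational model of the locus of $(u,v,w,s)$ satisfying all four equations of (\ref{eq3}).

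The main step, and the principal obstacle, is to show that $E(K)$ has positive Mordell--Weil rank. I would exhibit a further $K$-rational point $P_1\in E(K)$ and verify that it is not a torsion point by invoking Silverman's specialization theorem: it suffices to find one pair $(u_0,v_0)\in\Q^2$ for which $E$ has good reduction and the specialized point $P_1(u_0,v_0)$ has infinite order on $E_{(u_0,v_0)}/\Q$. Non-torsion of the specialization can be confirmed either by computing a few iterates $nP_1(u_0,v_0)$ under the group law and observing rapid growth of the denominators, or by applying Lutz--Nagell together with reduction modulo a small prime.

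Once $E(K)$ is known to contain a point of infinite order, the cyclic subgroup $\{nP_1:n\in\Z\}$ produces infinitely many distinct values $w=w_n(u,v)\in\Q(u,v)$ satisfying the quartic $T^2=P$, and each such value, read back through the formulas of Theorem \ref{thm2}, yields a two-parameter rational solution $(x,y,z,p,q,r,s)$ of the system (\ref{eq3}). Distinct multiples $nP_1$ give distinct $w_n$, hence distinct solutions, which establishes the theorem.
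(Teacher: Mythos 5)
Your proposal follows essentially the same route as the paper: substitute the Theorem \ref{thm2} parametrization into $t_x+t_y+t_z=t_s$, use $8t_s+1=(2s+1)^2$ to get a quartic in $w$ over $\Q(u,v)$, pass to a Weierstrass model via the rational point at $w=0$, and certify positive rank by specializing a point and ruling out torsion with Lutz--Nagell (the paper checks that $2P_{2,3}$ at $(u,v)=(2,3)$ has non-integral coordinates). The only substantive content you defer is the explicit exhibition of the non-torsion point $P_1$, which the paper supplies by direct computation; everything else matches.
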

\begin{proof}
We know that the functions $x,y,z,p,q,r\in\Q(u,v,w)$ we have
obtained in the proof of the Theorem \ref{thm2} satisfied the system
(\ref{eq1}). So in order to find solutions of the system (\ref{eq3})
it is enough to consider the last equation
$t_{x}+t_{y}+t_{z}=t_{s}$. If we put calculated quantities $x,y,z$
into the equation $t_{x}+t_{y}+t_{z}=t_{s}$, use the identity
$8t_{s}+1=(2s+1)^2$ and get rid of denominators we get the equation
of quartic curve $C$ defined over the field $\Q(u,v)$
\begin{equation*}
C:\;h^2=a_{4}(u,v)w^4+a_{3}(u,v)w^3+a_{2}(u,v)w^2+a_{1}(u,v)w+a_{0}(u,v)=:h(w),
\end{equation*}
where
\begin{align*}
&a_{0}(u,v)=a_{4}(-u,v)=(u-1)^2(-1+u+2v+2uv-v^2+uv^2)^2,\\
&a_{1}(u,v)=a_{3}(-u,v)=4(u-1)(v^2-1)\Big(\frac{u^4-1}{u-1}(v^2+1)+2(u-1)(u^2+4u+1)v\Big),\\
&a_{2}(u,v)=4(1-10u^2 + u^4 )v^2 +
8(u^4-1)v(1+v^2)+2(3+2u^2+3u^4)(1+v^4).
\end{align*}

Because the polynomial $h\in\Q(u,v)[w]$ has not multiple roots the
curve $C$ is smooth. Moreover, we have $\Q(u,v)$-rational point on
$C$ given by
\begin{equation*}
Q=(0,(u-1)(-1+u+2v+2uv-v^2+uv^2)).
\end{equation*}
If we treat $Q$ as a point at infinity on the curve $C$ and use the
method described in \cite[page 77]{Mor} we conclude that $C$ is
birationally equivalent over $\Q(u,v)$ to the elliptic curve with
the Weierstrass equation
\begin{equation*}
E:\;Y^2=X^3-27f(u,v)X-27g(u,v),
\end{equation*}
where
\begin{equation*}
\begin{array}{ll}
  f(u,v)=&  u^4(v^8+1)+4u^2(u^4-1)v(v^6+1)+\\
           &  \quad(1+8u^2-22u^4+8u^6+u^8)v^2(1+v^4)+  \\
           &  \quad4(u^4-1)(u^4-3u^2-1)v^3(1+v^2)+ \\
           &  \quad2(3-16u^2+29u^4-16u^6+3u^8)v^4,\\
           \\
  g(u,v)=  &  (u^2(v^4+1)+2(u^4-1)v(v^2+1)+2(2-5u^2+2u^4)v^2)\times \\
           &  (-2f(u,v)+3(u^2-1)^2v^2(1+u^2-2v+2u^2v+v^2+u^2v^2)^2).
\end{array}
\end{equation*}

The mapping $\phi:\;C\ni (w,h) \mapsto (X,Y)\in E$ is given by
\begin{align*}
&w=a_{4}(u,v)^{-1}\Big(\Big(\frac{16a_{4}(u,v)^{\frac{3}{2}}Y-27d(u,v)}{24a_{4}(u,v)X-54c(u,v)}\Big)-\frac{a_{3}(u,v)}{4}\Big)\\
&h=a_{4}(u,v)^{-\frac{3}{2}}\Big(-\Big(\frac{16a_{4}(u,v)^{\frac{3}{2}}Y-27d(u,v)}{24a_{4}(u,v)X-54c(u,v)}\Big)^2+\frac{8a_{4}(u,v)X}{9}+c(u,v)\Big).
\end{align*}
We should note that the quantity
$a_{4}(u,v)^{\frac{3}{2}}=((u+1)(1+u+\ldots\;))^3$ is a polynomial
in $\Z[u,v]$, so our mapping is clearly rational. The quantities
$c,d\in\Z[u,v]$ are given below:
\begin{equation*}
\begin{array}{ll}
  c(u,v)= & -4(u+1)^2\times\Big(-u^2(u+1)^2(v^8+1)+ \\
          & \quad\; (u^2-1)(1-10u-2u^2-10u^3+u^4)v(v^6+1)+ \\
          & \quad\;2(u-1)^2(3-4u-6u^2-4u^3+3u^4)v^2(v^4+1)+ \\
          & \quad\;(u-1)^2(15+10u+2u^2+10u^3+15u^4)v^3(v^2+1) + \\
          & \quad\;2(10+20u-5u^2-46u^3-5u^4+20u^5+10u^6)v^4\Big)/3, \\
          \\
  d(u,v)= & 16(u-1)u(u+1)^4v(v^2-1)\times \\
          & \quad\;(-1+u+2v+2uv-v^2+uv^2)(1+u^2-2v+2u^2v+v^2+u^2v^2)\times \\
          & \quad\;(-1+u^2-4v-4u^2v+10v^2-10u^2v^2-4v^3-4u^2v^3-v^4+u^2v^4).
\end{array}
\end{equation*}

In order to finish the proof of our theorem we must show that the
set of $\Q(u,v)$-rational points on the elliptic curve $E$ is
infinite. This will be proved if we find a point with infinite order
in the group $E(\Q(u,v))$ of all $\Q(u,v)$-rational points on the
curve $E$. In general this is not an easy task. First of all note
that there is torsion point $T$ of order 2 on the curve $E$ given
by
\begin{equation*}
T=(3u^2(v^4+1)+6(u^4-1)v(v^2+1)+6(u^2-2)(2u^2-1)v^2,\;0).
\end{equation*}
It is clear that this point is not suitable for our purposes.
Fortunately in our case we can find another point
\begin{align*}
P=\Big(&\frac{3}{4}((3-2u^2+3u^4)(v^4+1)+8(u^4-1)v(v^2+1)+2(5-14u^2+5u^4)v^2),\\
       &\frac{27}{8}(u^2-1)^2(v^2-1)((u^2+1)(v^4+1)+4(u^2-1)v(v^2+1)+6(u^2+1)v^2\Big).
\end{align*}

Now, if we specialize the curve $E$ for $u=2,v=3$, we obtain the
elliptic curve
 \begin{equation*}
E_{2,\;3}:\;Y^2=X^3-28802736X+40355763840
 \end{equation*}
with the point $P_{2,3}=(5736,\;252720)$, which is the
specialization of the point $P$. As we know, the points of finite
order on the elliptic curve $y^2=x^3+ax+b,\;a,\;b\in\mathbb{Z}$ have
integer coordinates \cite[page 177]{Sil}, while
\begin{equation*}
2P_{2,\;3}=(765489/100,\; -518102487/1000);
\end{equation*}
therefore, $P_{2,\;3}$ is not a point of finite order on
$E_{2,\;3}$, which means that $P$ can not be a point of finite order
on $E$. Therefore, $E$ is a curve of positive rank. Hence, its set
of $\Q(u,v)$-rational points is infinite and our theorem is proved.
\end{proof}

Let us note the obvious
\begin{cor}
Let $f\in\Z[x]$ be a polynomial of degree two with two distinct
rational roots. Then the system of equations
\begin{equation*}
f(x)+f(y)=f(p),\quad f(y)+f(z)=f(q),\quad f(z)+f(x)=f(r),\quad
f(x)+f(y)+f(z)=f(s),
\end{equation*}
has infinitely many rational parametric solutions depending on two
parameters.
\end{cor}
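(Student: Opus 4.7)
The plan is to leverage the three-parameter rational solution $(x(u,v,w), y(u,v,w), z(u,v,w), p, q, r)$ from Theorem \ref{thm2}, which already satisfies the first three equations of (\ref{eq3}) identically. Only the last equation $t_{x}+t_{y}+t_{z}=t_{s}$ remains to be imposed. Using the elementary identity $8t_{s}+1=(2s+1)^{2}$, this reduces to the condition that $8(t_{x}+t_{y}+t_{z})+1$ be a rational square. I would substitute the parametric expressions for $x,y,z$ into this quantity and clear denominators. The result is an equation of the form $h^{2}=a_{4}(u,v)w^{4}+a_{3}(u,v)w^{3}+a_{2}(u,v)w^{2}+a_{1}(u,v)w+a_{0}(u,v)$, i.e.\ a smooth quartic curve $C$ defined over the function field $\Q(u,v)$, with unknowns $w$ and $h$.

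The next step is to exhibit an obvious rational point on $C$. Setting $w=0$ should yield $h^{2}=a_{0}(u,v)$, and one expects $a_{0}(u,v)$ to be a square in $\Q(u,v)$ (this is where the symmetry of the original construction pays off), producing an explicit $\Q(u,v)$-rational point $Q$. With such a point in hand, the classical transformation of a quartic with a rational point into Weierstrass form (as described, for instance, in Mordell's book, p.\ 77) converts $C$ birationally over $\Q(u,v)$ into an elliptic curve
\begin{equation*}
E:\;Y^{2}=X^{3}-27f(u,v)X-27g(u,v),
\end{equation*}
for explicit polynomials $f,g\in\Z[u,v]$. I would write out the birational map $\phi$ in closed form so that rational points on $E$ can be transported back to rational points on $C$, hence to rational solutions of (\ref{eq3}).

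It then remains to show that $E$ has positive rank over $\Q(u,v)$. An obvious $2$-torsion point $T$ can usually be read off from an algebraic factorization of the cubic and is useless for producing infinitely many solutions, so I would search for a candidate non-torsion point $P$ whose coordinates are low-degree polynomials in $u,v$, symmetric in the natural involutions appearing in $f,g$. To prove $P$ has infinite order in $E(\Q(u,v))$, I would specialize: evaluate $(u,v)$ at small integer values such as $(2,3)$, obtaining an elliptic curve $E_{2,3}$ over $\Q$ and a specialized point $P_{2,3}$. By the Nagell–Lutz theorem, any torsion point on a short Weierstrass model over $\Q$ with integer coefficients has integer coordinates. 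Computing $2P_{2,3}$ and checking that its coordinates fail to be integers would then prove $P_{2,3}$ is of infinite order; by the standard specialization principle, the generic point $P$ is also of infinite order, so $E(\Q(u,v))$ is infinite.

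The main obstacle is guessing the non-torsion point $P$: the reduction to the quartic $C$ and from there to $E$ is algorithmic albeit tedious, and the Nagell–Lutz specialization is routine once a candidate is available, but finding a candidate whose coefficients are manageable polynomials in $u,v$ typically requires experimentation, perhaps by first locating points on several specializations $E_{u_{0},v_{0}}$ and then interpolating. Once $P$ is identified, pulling back its multiples through $\phi^{-1}$ yields infinitely many $(w,h)\in\Q(u,v)^{2}$ on $C$, and thereby infinitely many rational solutions of (\ref{eq3}) depending on the two free parameters $u,v$.
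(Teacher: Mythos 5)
Your proposal is, step for step, the paper's proof of Theorem \ref{thm3}: the reduction to the single remaining equation $t_{x}+t_{y}+t_{z}=t_{s}$ via $8t_{s}+1=(2s+1)^{2}$, the quartic $C$ over $\Q(u,v)$ with the rational point $Q$ at $w=0$ coming from $a_{0}(u,v)$ being a square, the passage to a Weierstrass model by the method in Mordell's book, the $2$-torsion point $T$, the candidate non-torsion point $P$, and the Nagell--Lutz argument at the specialization $(u,v)=(2,3)$ using the non-integrality of $2P_{2,3}$. All of that is correct and is exactly what the author does to prove Theorem \ref{thm3}.

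The gap is that the statement you were asked to prove is not Theorem \ref{thm3} but the corollary for an arbitrary $f\in\Z[x]$ of degree two with two distinct \emph{rational} roots, and your write-up never leaves the triangular-number case: you do not say why the general $f$ reduces to $t$. The missing step is short but must appear. Write $f(X)=a(X-\alpha)(X-\beta)$ with $\alpha,\beta\in\Q$, $\alpha\neq\beta$, and substitute $X=\alpha+(\alpha-\beta)Y$; then
\begin{equation*}
f(X)=a(\alpha-\beta)^{2}\,Y(Y+1)=2a(\alpha-\beta)^{2}\,t_{Y},
\end{equation*}
so after cancelling the nonzero constant $2a(\alpha-\beta)^{2}$ each equation $f(\cdot)+f(\cdot)=f(\cdot)$ becomes the corresponding triangular-number equation in the transformed variables. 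Since this substitution is an invertible affine map with rational coefficients --- and this is precisely where the rationality of the roots is used, which is why the hypothesis here is stronger than the ``two different roots'' of the Remark following Theorem \ref{thm2} --- the two-parameter family of rational solutions furnished by Theorem \ref{thm3} transports to a two-parameter family of rational solutions of the system for $f$. The paper itself gives no proof of the corollary, labelling it ``obvious''; adding the paragraph above is all that is needed to close your argument.
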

\begin{exam}
{\rm Using the method of proof of the above theorem we produce now
an example of rational functions $x,y,z\in\Q(u)$ which satisfy
system (\ref{eq3}) for some $p,q,r,s\in\Q(u)$ which can be easily
find (with the computer of course). Because the considered
quantities are rather huge we put here $v=2$. Then we have that
\begin{equation*}
P_{2}+T_{2}=(1404u^4+219u^2+4,\;8(9u^2+1)^2(81u^2+1)),
\end{equation*}
where $P_{2},T_{2}$ are specializations of points $P,T$ in $v=2$
respectively. Now, we have that
\begin{align*}
&(w,h)=\phi^{-1}(P_{2}+T_{2})=\\
      &\Big(\frac{(9u-1)^2(9u+1)(63u^2+17)}{3(u+1)(6561u^4+1134u^3+306u-1)},\;\frac{8(9u-1)(9u+1)F(u)}{9(u+1)^2(6561u^4+1134u^3+306u-1)^2}\Big)
\end{align*}
where
\begin{align*}
F(u)=&\;43046721u^8+11573604u^7+6388956u^5+\\
     &\quad 1285956u^6+680886u^4+919836u^3+93636u^2+10404u+1.
\end{align*}

Using the calculated values and the definition of $x,y,z$ given in
the proof of Theorem \ref{thm2} we find that the functions
\begin{align*}
&x(u)=\frac{2(u-1)(9u-1)^2(9u+1)^2(63u^2+17)(81u^2+1)}{G(u)},\\
&y(u)=\frac{3u(11+42u^2+2187u^4)(1+2754u^2+3645u^4)}{G(u)},\\
&z(u)=\\
&\quad \frac{2(27u^2-18u-5)(81u^2-48u-1)(135u^2+18u+7)(243u^3-99u^2+57u-1)}{3G(u)},\\
\end{align*}
where
\begin{align*}
G(u)=-&23914845u^9+110008287u^8-18528264u^7+15956352u^6+\\
      &-473850u^5-940410u^4-91008u^3-96264u^2-33u+35
\end{align*}
satisfied the system of equations (\ref{eq3}) }\end{exam}

\bigskip

 \hskip 4.5cm       Maciej Ulas

 \hskip 4.5cm       Jagiellonian University

 \hskip 4.5cm       Institute of Mathematics

 \hskip 4.5cm       Reymonta 4

 \hskip 4.5cm       30 - 059 Krak\'{o}w, Poland

 \hskip 4.5cm      e-mail:\;{\tt Maciej.Ulas@im.uj.edu.pl}

 \end{document}